\newtheoremstyle{dotless}{}{}{\itshape}{}{\bfseries}{}{ }{}
\theoremstyle{dotless}
\newtheorem{theorem}{Theorem}[section] 
\DeclareMathOperator{\ex}{ex}
\DeclareMathOperator{\sat}{sat}
\def\E{\mbox{{\bf E}}}
\title{Nearly-Regular Hypergraphs and Saturation of Berge Stars}
\author{Bethany Austhof and Sean English}
\begin{document}
	\maketitle
	
\begin{abstract}
	Given a graph $G$, we say a $k$-uniform hypergraph $H$ on the same vertex set contains a Berge-$G$ if there exists an injection $\phi:E(G)\to E(H)$ such that $e\subseteq\phi(e)$ for each edge $e\in E(G)$. A hypergraph $H$ is Berge-$G$-saturated if $H$ does not contain a Berge-$G$, but adding any edge to $H$ creates a Berge-$G$. The saturation number for Berge-$G$, denoted $\sat_k(n,\text{Berge-}G)$ is the least number of edges in a $k$-uniform hypergraph that is Berge-$G$-saturated. We determine exactly the value of the saturation numbers for Berge stars. As a tool for our main result, we also prove the existence of nearly-regular $k$-uniform hypergraphs, or $k$-uniform hypergraphs in which every vertex has degree $r$ or $r-1$ for some $r\in \mathbb{Z}$, and less than $k$ vertices have degree $r-1$. 
\end{abstract}

\section{Introduction}
The main problem in extremal graph theory involves finding the \emph{extremal number} of $F$, $\ex(n,F)$, which is the maximum number of edges among all $n$ vertex graphs that do not contain a subgraph isomorphic to some forbidden graph $F$. This problem was originally studied by Mantel for triangles, \cite{M07}. Tur\'an's Theorem generalized this, giving the value of $\ex(n,K_s)$ for all $s$, \cite{T41}.

We say a graph $G$ is $F$-free if $G$ does not contain a subgraph isomorphic to $F$. An easy but interesting observation is that if $G$ is an $F$-free on $n$ vertices with $|E(G)|=\ex(n,F)$, then $G$ has the property that for any edge $e$ in the complement of $G$, $e\in E(\overline{G})$, adding $e$ to $G$ must create a subgraph isomorphic to $F$. This leads to the following natural definition: We say $G$ is $F$-saturated if $G$ is $F$-free, but for any edge $e\in E(F)$, $G+e$ contains a copy of $F$. Thus, we can say that $\ex(n,F)$ is the maximum number of edges in any $F$-saturated graph on $n$ vertices. This leads to an interesting minimization problem associated with extremal numbers.

The saturation number of a forbidden graph $F$, denoted $\sat(n,F)$ is the least number of edges over all graphs $G$ on $n$ vertices that are $F$ saturated. It has been seen that saturation numbers and extremal numbers behave very differently. Possibly the most striking difference is in their asymptotic growth rates. The Erd\H{o}s-Stone Theorem, sometimes referred to as the Fundamental Theorem of Extremal Graph Theory, characterizes the growth rate of extremal numbers for all non-bipartite forbidden graphs. Let $\chi(F)$ denote the chromatic number of $F$. Given functions $f=f(n)$ and $g=g(n)$, we write $f=O(g)$ if there exists some constant $c$ such that $f\leq c g$ for all sufficiently large $n$, and we write $f=o(g)$ if $\lim_{n\to\infty} f/g=0$.

\begin{theorem}{Erd\H{o}s-Stone Theorem, \cite{ES46}}
	For all non-empty forbidden graphs $F$, we have
	\[
	\ex(n,F)=\left(\frac{\chi(F)-2}{\chi(F)-1}-o(1)\right)\binom{n}2
	\]
\end{theorem}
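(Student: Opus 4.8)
The plan is to prove the two bounds separately, writing $r=\chi(F)$ and using that the stated coefficient satisfies $\tfrac{\chi(F)-2}{\chi(F)-1}=1-\tfrac1{r-1}$. For the lower bound I would exhibit a single dense $F$-free graph, namely the balanced complete $(r-1)$-partite Tur\'an graph $T_{r-1}(n)$. Since $T_{r-1}(n)$ is properly $(r-1)$-colorable, any subgraph of it is $(r-1)$-colorable and hence cannot have chromatic number $r$; in particular $T_{r-1}(n)$ is $F$-free. Counting its edges (the only non-edges lie inside the $r-1$ parts, contributing $(r-1)\binom{n/(r-1)}{2}$ non-edges) gives $e(T_{r-1}(n))=\bigl(1-\tfrac1{r-1}\bigr)\binom n2-O(n)$, which already yields $\ex(n,F)\ge\bigl(\tfrac{r-2}{r-1}-o(1)\bigr)\binom n2$.

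The substance is the matching upper bound. First I would reduce the arbitrary graph $F$ to a complete multipartite target: taking a proper $r$-coloring of $F$ and padding each color class up to size $t:=|V(F)|$ embeds $F$ into the complete $r$-partite graph $K_r(t)$ with all parts of size $t$. Hence $\ex(n,F)\le\ex(n,K_r(t))$, and it suffices to show $\ex(n,K_r(t))\le\bigl(\tfrac{r-2}{r-1}+o(1)\bigr)\binom n2$, i.e. that for every $\epsilon>0$ and $t$, every graph on $n$ (large) vertices with at least $\bigl(1-\tfrac1{r-1}+\epsilon\bigr)\binom n2$ edges must contain $K_r(t)$.

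I would prove this last statement by induction on $r$. The base case $r=2$ asks that any graph of positive edge density contain $K_{2}(t)=K_{t,t}$, which is exactly the K\H{o}v\'ari--S\'os--Tur\'an bound $\ex(n,K_{t,t})=O(n^{2-1/t})=o(n^2)$. For the inductive step, note that the hypothesized density $1-\tfrac1{r-1}+\epsilon$ exceeds the threshold $1-\tfrac1{r-2}+\epsilon$ needed to apply the case $r-1$ (since $\tfrac1{r-2}-\tfrac1{r-1}>0$), so $G$ already contains a large blow-up $K_{r-1}(S)$ with $S\gg t$ on a vertex set $W=V_1\cup\dots\cup V_{r-1}$. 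It then remains to produce the $r$-th part: I want $t$ vertices outside $W$ that are each completely joined to a common $t$-subset $T_i\subseteq V_i$ in every part. Counting pairs $\bigl(u,(T_1,\dots,T_{r-1})\bigr)$ with $T_i\subseteq N(u)\cap V_i$ for all $i$ and pigeonholing on the $\binom St^{r-1}$ possible tuples shows that if $\sum_{u\notin W}\prod_i\binom{|N(u)\cap V_i|}{t}$ exceeds $(t-1)\binom St^{r-1}$, then some tuple is shared by $t$ extending vertices, completing a $K_r(t)$.

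The hard part is precisely this extension step, because a single blow-up is not enough: a vertex has on the order of $\tfrac1{r-1}n$ non-neighbors, which dwarfs $|W|$, so the edge-density bound alone does not force many vertices to be well-connected to all of $V_1,\dots,V_{r-1}$. To make the count go through I would strengthen the input by supersaturation: the inductive hypothesis upgrades (via Erd\H{o}s's averaging over all $(r-1)S$-subsets) to the statement that $G$ contains $\Omega\bigl(n^{(r-1)S}\bigr)$ copies of $K_{r-1}(S)$, and averaging the extension count over this positive fraction of copies produces one copy with the required $(t-1)\binom St^{r-1}+t$ good extensions. An entirely rigorous alternative for this step is the Szemer\'edi Regularity Lemma: a regular partition of $G$ yields a reduced graph of density $>1-\tfrac1{r-1}$, which contains $K_r$ by Tur\'an's theorem, and the embedding (``Key'') lemma then blows each of its $r$ regular pairs up into the parts of a $K_r(t)$. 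Either route closes the gap and, combined with the Tur\'an lower bound, pins the coefficient at $\tfrac{r-2}{r-1}$.
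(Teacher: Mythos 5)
The paper does not actually prove this statement: the Erd\H{o}s--Stone theorem is quoted in the introduction as classical background, with a citation to \cite{ES46}, so there is no internal proof to compare against and your proposal must be judged on its own terms. Much of it is correct and standard: the lower bound via the Tur\'an graph $T_{r-1}(n)$, the reduction $F\subseteq K_r(t)$ with $t=|V(F)|$, the K\H{o}v\'ari--S\'os--Tur\'an base case $r=2$, and the pigeonhole criterion that $\sum_{u\notin W}\prod_i\binom{|N(u)\cap V_i|}{t}>(t-1)\binom{S}{t}^{r-1}$ forces a $K_r(t)$. You have also correctly isolated the crux: a single copy of $K_{r-1}(S)$ obtained from the density hypothesis alone need not admit many outside vertices with $t$ neighbors in every part.

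However, your primary fix for that crux has a genuine gap, and as stated is circular. Averaging the extension count over all copies of $K_{r-1}(S)$ is exactly counting copies of the augmented graph $H^+$ consisting of a $K_{r-1}(S)$ plus an apex joined to $t$ vertices in each part. Since $H^+$ contains $K_r$, it has chromatic number $r$, so supersaturation of $(r-1)$-chromatic configurations says nothing about it: a lower bound on copies of $H^+$ at density $1-\frac{1}{r-1}+\epsilon$ is essentially the level-$r$ statement you are trying to prove, and nothing you have established rules out the average extension count being zero. The classical repair is not supersaturation of $K_{r-1}(S)$ but a minimum-degree reduction, which your sketch omits: first pass to a subgraph $G'$ with $\delta(G')\geq\left(1-\frac{1}{r-1}+\frac{\epsilon}{2}\right)|V(G')|$ by iteratively deleting low-degree vertices; then for a single copy with $W=V_1\cup\dots\cup V_{r-1}$, count edges between $W$ and the outside. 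The set $W$ emits at least $|W|(\delta-|W|)$ such edges, while each \emph{bad} outside vertex (fewer than $t$ neighbors in some $V_i$) absorbs at most $(r-2)S+t-1$ of them, so choosing $S$ large compared to $t/\epsilon$ forces at least $c\epsilon n$ good vertices, and your pigeonhole then closes. (An alternative non-circular route: supersaturate $K_r$ itself --- Tur\'an's theorem plus Erd\H{o}s--Simonovits averaging gives $\Omega(n^r)$ copies of $K_r$ --- and apply Erd\H{o}s's box theorem for $r$-uniform hypergraphs to blow these up to $K_r(t)$.) Your regularity-lemma fallback is sound and standard, so the argument can be completed along that route; but the supersaturation averaging step, which you present as the main repair, does not follow from what you establish.
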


Thus, extremal numbers for non-bipartite forbidden graphs grow quadratically in $n$. In contrast to this, we have the following theorem by K\'asonyi and Tuza, which shows that saturation numbers grow no faster than linearly in $n$.
\begin{theorem}{\cite{KT86}}
	For all forbidden graphs $F$, we have
	\[
	\sat(n,F)=O(n).
	\]
\end{theorem}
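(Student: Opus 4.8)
The plan is to prove this upper bound by exhibiting, for each fixed $F$ (which we may assume has at least one edge, since otherwise the statement is vacuous), a single family of $F$-saturated graphs $G_n$ with at most $c\cdot n$ edges for a constant $c=c(F)$. Since $\sat(n,F)$ is the \emph{minimum} number of edges over all $F$-saturated graphs on $n$ vertices, any such family immediately yields $\sat(n,F)\le c\cdot n=O(n)$, so the entire task reduces to producing one economical saturated construction.

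The construction I would use is built around a ``local completion'' parameter of $F$. For an edge $xy\in E(F)$, let $N(x,y)=(N_F(x)\cup N_F(y))\setminus\{x,y\}$ be the set of vertices adjacent in $F$ to $x$ or to $y$ but distinct from both, and set $u=u(F)=\min_{xy\in E(F)}|N(x,y)|$, achieved by some edge $x_0y_0$. I would then take $G_n$ to consist of a bounded ``core'' $C$, namely a fixed copy of $F-\{x_0,y_0\}$ on $|V(F)|-2$ vertices, together with an independent set $I$ on the remaining $n-|V(F)|+2$ vertices, each vertex of $I$ joined to the same $u$-element subset $S\subseteq V(C)$ corresponding to the image of $N(x_0,y_0)$. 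The edge count is then $\binom{|V(F)|-2}{2}+u\,(n-|V(F)|+2)=O(n)$, exactly as required, because $u$ and $|V(F)|$ depend only on $F$. (As sanity checks, this gives $u=1$ for $F=K_3$, recovering the star, and $u=p-2$ for $F=K_p$, recovering the Erd\H{o}s--Hajnal--Moon graph.)

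It remains to verify the two defining properties of saturation. For the saturation direction, adding an edge between two vertices $a,b\in I$ produces a pair of adjacent vertices both completely joined to $S$; mapping $x_0\mapsto a$, $y_0\mapsto b$, and the remaining vertices of $F$ onto $C$ realizes a copy of $F$, since every $F$-edge incident to $x_0$ or $y_0$ now has its other endpoint available either as the new edge $ab$ or as an image inside $S$. I would treat the other types of new edges (between $I$ and $C$, and any missing edges inside $C$) by the same embedding principle, refining the core if necessary so that each such addition likewise completes a copy of $F$.

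The main obstacle is the $F$-freeness of $G_n$ together with making the saturation argument hold for \emph{every} non-edge at once. Because $I$ is independent, any copy of $F$ inside $G_n$ must send an independent set of $F$ into $I$ with all of its neighbors landing in $S$; the real danger is that $F$ might embed ``accidentally,'' using two vertices of $I$ in the roles of $x_0$ and $y_0$ without any edge being added, which would destroy $F$-freeness. Ruling this out is the crux, and it is precisely what forces the careful, $F$-dependent choice of the completing edge $x_0y_0$, the core $C$, and the subset $S$: one must arrange that $F$ cannot be assembled within $C\cup I$ until a new edge is inserted. I expect that pinning down this choice---equivalently, identifying the correct minimal completion parameter $u(F)$ and verifying that the associated $G_n$ is genuinely $F$-free while remaining saturated---will be the technically delicate heart of the argument, whereas the $O(n)$ edge bound itself follows immediately once a valid construction is in hand.
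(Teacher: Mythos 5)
This theorem is quoted in the paper from K\'asonyi and Tuza \cite{KT86} without proof, so there is no in-paper argument to compare against; judged on its own merits, your proposal has a genuine gap, and in fact the construction fails as stated. Take $F=P_3$, the path on vertices $x,y,z$. Each edge gives $|N(x,y)|=1$, so $u=1$, the core $C=F-\{x_0,y_0\}$ is a single vertex, and $G_n$ is the star $K_{1,n-1}$ --- which contains $P_3$, so $G_n$ is not even $F$-free. Note that the accidental copy uses two vertices of $I$ in the roles of the two \emph{nonadjacent} endpoints of $P_3$, a failure mode different from the one you flagged (two $I$-vertices playing the adjacent pair $x_0,y_0$), so even a correct resolution of your stated ``crux'' would not save the construction. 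Saturation also genuinely fails at non-edges meeting the core: for $F=2K_2$ you get $u=0$, so $G_n$ is a single edge plus isolated vertices, and adding an edge incident to the existing edge creates a $P_3$ but no $2K_2$ (the true minimum saturated graph here is $K_3$ plus isolated vertices, which your template cannot produce). Your closing paragraph concedes exactly these points (``refining the core if necessary''), but that concession \emph{is} the theorem: the $O(n)$ edge count is the trivial part, and all of the difficulty lives in exhibiting a structure that is simultaneously $F$-free and saturated at every non-edge.

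Your sanity checks $K_3$ and $K_p$ are misleading precisely because complete graphs are the one case where ``independent set fully joined to a fixed $(p-2)$-set'' cannot create accidental copies, since any clique meets $I$ in at most one vertex; this is the Erd\H{o}s--Hajnal--Moon construction, and it does not generalize the way you hope. The actual K\'asonyi--Tuza argument is subtler: roughly speaking, their parameter is extracted by deleting a \emph{vertex} of $F$ chosen via a minimum-degree condition, with a case analysis on whether $F$ has a vertex of degree one, and the sparse spanning part of the saturated graph is not an independent set joined to one fixed set but (depending on the case) a matching or forest of stars attached to a bounded part --- the $P_3$ example, where the minimum saturated graph is a near-perfect matching with $\lfloor n/2\rfloor$ edges, already shows the sparse part must be allowed to contain edges. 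So the missing idea is not a technicality to be deferred: one must redesign the construction so that no two vertices of the sparse part can jointly play \emph{any} pair of roles in $F$, adjacent or not, and so that additions touching the bounded part also complete a copy of $F$.
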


Extremal numbers and saturation numbers have also been studied for hypergraphs. A hypergraph $H$ is a generalization of a graph, where the edges of $H$ can contain arbitrarily many vertices, rather than just two. A hypergraph is called $k$-uniform if every edge contains exactly $k$ vertices. Thus, a $2$-uniform hypergraph is just a graph. Hypergraph extremal problems are notoriously difficult, for example, let $K_4^{(3)}$ denote the complete $3$-uniform hypergraph on $4$ vertices. Not even the growth rate of $\ex(n,K_4^{(3)})$ is known, even though this may be the easiest non-trivial hypergraph to look at.

While in general hypergraph extremal problems have been too difficult to make much progress on, recently specific interesting families of hypergraphs have been studied, and significant progress has been made for these families. Given a graph $F$ and hypergraph $H$ embedded on the same vertex set, we say a hypergraph $H$ is Berge-$F$ if there is a bijection $\phi:E(F)\to E(H)$ such that $e\subseteq \phi(e)$ for all $e\in E(F)$. This can be thought of as adding vertices to the edges of $F$ to make them hyperedges that form a copy of $H$, or shrinking down the hyperedges of $H$ to graph edges that create $F$. It is worth noting that many non-isomorphic hypergraphs can be Berge-$F$ for the same graph $F$.

Analogously to the graph case, we can say a $k$-uniform hypergraph $H$ is Berge-$F$-saturated if $H$ does not contain a subgraph isomorphic to a Berge-$F$, but adding any hyperedge to $H$ creates a copy of Berge-$F$. Based on this, we can define the $k$-uniform extremal number $\ex_k(n,\text{Berge-}F)$ and saturation number $\sat_k(n,\text{Berge-}F)$ to be the maximum, and respectfully minimum, number of edges in a Berge-$F$-saturated $k$-uniform hypergraph on $n$ vertices. Extremal numbers for Berge hypergraphs have been studied extensively, \cite{lazebnik2003hypergraphs, gyHori2006triangle, bollobas2008pentagons, gyHori20123, furedi20173, T17, gerbner2017asymptotics}. On the other hand, saturation numbers for Berge hypergraphs have been mostly left untouched.

In the seminal paper on saturation numbers for Berge hypergraphs, saturation number for Berge hypergraphs for many common classes of graphs were studied by the second author and others, including triangles, matchings, cycles, paths and stars \cite{EGKMS2017}. The growth rate of saturation numbers also have been studied by the second author and others, and it has been determined that $\sat_k(n,\text{Berge-}F)=O(n)$ for $3\leq k\leq 5$ \cite{EGMT2018}. Recently, Axenovich and Winter have begun considering Berge saturation for non-uniform hypergraphs, showing that there are Berge-$F$-saturated non-uniform hypergraphs with $|E(F)|-1$ edges for all graphs $F$ except stars, in which there are saturated examples with $|E(F)|$ edges \cite{AW2018}. Here we will study the saturation number for Berge stars in the uniform case. A special case of stars was considered by the second author and others.

\begin{theorem}{\cite{EGKMS2017}}\label{theorem star from old paper}
For all $n\geq k^2$, 
\[
\sat_k(n,K_{1,k+1})=n-k+1.
\]
\end{theorem}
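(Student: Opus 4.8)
The plan is to prove the two inequalities $\sat_k(n,K_{1,k+1})\ge n-k+1$ and $\sat_k(n,K_{1,k+1})\le n-k+1$ separately. First I would reformulate containment of a Berge star: a $k$-uniform hypergraph $H$ contains a Berge-$K_{1,k+1}$ exactly when some vertex $v$ is its centre, meaning there are $k+1$ distinct edges through $v$ whose sets of non-$v$ vertices admit a system of distinct representatives (an SDR). Writing $\nu(v)$ for the maximum number of edges through $v$ that admit such an SDR, $H$ is Berge-$K_{1,k+1}$-free precisely when $\nu(v)\le k$ for every $v$; since $\nu(v)\le\deg(v)$, every vertex that could serve as a centre must have degree at least $k$.

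For the lower bound, suppose $H$ is Berge-$K_{1,k+1}$-saturated and let $e$ be any non-edge. Adding $e$ creates a Berge star, which must use $e$ because $H$ is free; as the centre of a Berge star lies in every one of its edges, the centre $c$ lies in $e$ and satisfies $\nu_{H+e}(c)=k+1$, forcing $\nu_H(c)=k$ and hence $\deg(c)\ge k$. Thus every non-edge contains a vertex of degree at least $k$. Setting $B=\{v:\deg(v)\le k-1\}$, no $k$-subset of $B$ can be a non-edge, so every $k$-subset of $B$ is an edge; counting the edges inside $B$ through a fixed $v\in B$ gives $\deg(v)\ge\binom{|B|-1}{k-1}$, which is at least $k$ as soon as $|B|\ge k+1$ --- impossible --- so $|B|\le k$. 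A brief split into $|B|=k$ (where $B$ itself is an edge, so each of its vertices has degree at least $1$) and $|B|\le k-1$ then bounds the total deficiency $\sum_v (k-\deg(v))^+$ by $k(k-1)$, giving $k\,|E(H)|=\sum_v\deg(v)\ge kn-k(k-1)$ and therefore $|E(H)|\ge n-k+1$.

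For the upper bound I would build a saturated hypergraph with exactly $n-k+1$ edges, guided by the equality case above: let $k$ vertices form a single common edge $S$ (each of degree $1$) and let the remaining $n-k$ vertices carry degree exactly $k$, so the degrees sum to $k(n-k+1)$. Concretely I would take $S$ together with a $k$-uniform, $k$-regular \emph{linear} hypergraph $R$ on the other $n-k$ vertices (any two edges of $R$ sharing at most one vertex), whose existence for $n\ge k^2$ is exactly the sort of statement the nearly-regular hypergraph tool is meant to furnish. Such an $H$ is Berge-$K_{1,k+1}$-free since every degree is at most $k$. For saturation, the only $k$-subset of $S$ is $S$ itself, so every non-edge $e$ contains a degree-$k$ vertex $v$; adding $e$ raises $\deg(v)$ to $k+1$, and since the $k$ edges of $R$ through $v$ have pairwise disjoint leaf-sets, each of size $k-1\ge 2$, one can fix a representative of $e\setminus\{v\}$ and then choose distinct representatives of the remaining $k$ leaf-sets avoiding it, yielding an SDR of size $k+1$ and a Berge star centred at $v$.

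The main obstacle is the upper bound. One must actually produce the regular, (near-)linear building block $R$ for every admissible $n$ and verify the SDR condition against \emph{every} non-edge, including adversarial ones lying entirely in the dense part whose vertices heavily reuse existing leaves; this is precisely the reason the existence of nearly-regular $k$-uniform hypergraphs with a tightly controlled degree sequence is isolated as a separate lemma. I expect the hypothesis $n\ge k^2$ to enter exactly here, as the room needed for each vertex's $k$ edges to spread their leaves, and I would handle the degenerate uniformity $k=2$ (where leaf-sets have size $1$ and the disjointness argument collapses) by a direct check.
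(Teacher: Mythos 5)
Your argument is essentially the paper's: your lower bound (the low-degree vertices must form a clique, hence there are at most $k$ of them, then degree counting) and your upper bound construction (one edge on a $k$-set $S$ plus a $k$-regular linear $k$-uniform hypergraph on the remaining $n-k$ vertices, verified saturated via the disjoint-leaf-set/SDR argument) are exactly what the paper's proof of Theorem \ref{theorem main} specializes to at $\ell=k+1$, where $c=k$ and the divisibility $k\mid k(n-k)$ makes the deficient set $D$ automatically empty. The only discrepancy is the range of $n$: a degree-$k$ vertex in a linear $k$-uniform hypergraph needs $k(k-1)$ distinct co-vertices, so your building block requires $n-k\ge k(k-1)+1$, i.e.\ $n\ge k^2+1$, and thus--- like the paper itself, which proves its general theorem only for large $n$ and cites \cite{EGKMS2017} for the exact threshold---your proposal does not actually cover the boundary cases down to $n\ge k^2$, precisely the spot you flagged as the expected source of that hypothesis.
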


Due to a certain structure necessary in the proof of the preceding theorem, it only applies to stars where the number of leaves is exactly one greater than the uniformity of the host hypergraph. In this note, we will extend this result by determining these saturation numbers exactly for any uniformity and any number of leaves. Here we present our main result

\begin{theorem}\label{theorem main}
	For all $k\geq 3$, $\ell\in \mathbb{N}$, and large $n$, we have
	\[
	\sat_k(n,\text{Berge-}K_{1,\ell})=\min_{a\in[n],\binom{a-1}{k-1}\leq \ell-2} \left\lceil\frac{(\ell-1)(n-a)}k\right\rceil+\binom{a}k.
	\]
\end{theorem}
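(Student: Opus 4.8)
The plan is to prove matching upper and lower bounds, both of which turn out to be governed by the single quantity $\binom ak+\CL{(\ell-1)(n-a)/k}$ minimized over feasible $a$. For the upper bound I would, for each $a$ with $\binom{a-1}{k-1}\le\ell-2$, exhibit a Berge-$K_{1,\ell}$-saturated hypergraph with exactly $\binom ak+\CL{(\ell-1)(n-a)/k}$ edges. Fix a set $A$ of $a$ vertices and place all $\binom ak$ edges inside $A$, so that each vertex of $A$ has degree $\binom{a-1}{k-1}\le\ell-2$; write $B$ for the remaining $n-a$ vertices. On $B$ I would invoke the nearly-regular hypergraph lemma with target degree $r=\ell-1$ to obtain a $k$-uniform hypergraph in which all but fewer than $k$ vertices have degree $\ell-1$, the exceptional ones having degree $\ell-2$; such a hypergraph has exactly $\FL{(\ell-1)(n-a)/k}$ edges. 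When $(\ell-1)(n-a)$ is not divisible by $k$ I would patch the fewer-than-$k$ deficient vertices using a single crossing edge that contains all of them together with enough vertices of $A$ to fill it out to size $k$; since $\binom{a-1}{k-1}\le\ell-2$ these $A$-vertices still finish with degree at most $\ell-1$, and now every vertex of $B$ has degree exactly $\ell-1$. The total is then exactly $\binom ak+\CL{(\ell-1)(n-a)/k}$, and freeness is immediate because every vertex has degree at most $\ell-1<\ell$, so no vertex lies in $\ell$ distinct hyperedges.

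The delicate point, and what I expect to be the main obstacle, is saturation: adding any non-edge $e$ must create a Berge-$K_{1,\ell}$. Since all $k$-subsets of $A$ are already edges (and when $a<k$ there are none at all), any non-edge $e$ meets $B$, so some vertex $v\in e\cap B$ passes from degree $\ell-1$ to degree $\ell$. It then remains to produce a system of distinct representatives: $\ell$ distinct leaves, one in each of the $\ell$ hyperedges through $v$. This is where I would build the $B$-hypergraph to be \emph{linear} (any two edges meeting in at most one vertex), which the nearly-regular construction can be arranged to satisfy for large $n$; then the $\ell-1$ old hyperedges through $v$ are pairwise disjoint away from $v$, and a short defect-Hall computation using $k-1\ge2$ shows that appending the new edge $e$, and at most one patching edge, still admits the required SDR. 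Verifying this robustly while simultaneously guaranteeing linearity, near-regularity, and fewer than $k$ deficient vertices is the technical heart of the argument and the reason the nearly-regular lemma is needed.

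For the lower bound I expect a clean counting argument. Let $H$ be Berge-$K_{1,\ell}$-saturated and set $A=\{v:\deg_H(v)\le\ell-2\}$ with $a=\abs A$. For any non-edge $e$, adding $e$ creates a Berge-$K_{1,\ell}$; since $H$ is free the new edge must be used, and all $\ell$ hyperedges of a Berge star pass through its center, so the center $c$ must lie in $e$, forcing $\deg_H(c)\ge\ell-1$. Hence every $k$-set of vertices of degree at most $\ell-2$ is already an edge, so $H[A]$ is complete and contributes $\binom ak$ edges; in particular each $v\in A$ satisfies $\binom{a-1}{k-1}\le\deg_H(v)\le\ell-2$, so $a$ is feasible. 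Every vertex of $B=V\setminus A$ then has degree at least $\ell-1$, giving $\sum_{v\in B}\deg_H(v)\ge(\ell-1)(n-a)$; since edges inside $A$ contribute nothing to this sum while each remaining edge contributes at most $k$, the number of edges meeting $B$ is at least $\CL{(\ell-1)(n-a)/k}$. Adding the two disjoint contributions yields $\abs{E(H)}\ge\binom ak+\CL{(\ell-1)(n-a)/k}$, which is at least the minimum over feasible $a'$ in the statement, matching the upper bound.
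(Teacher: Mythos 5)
Your lower bound is the paper's argument verbatim: the center of any Berge star created by a new edge must lie in that edge and have degree at least $\ell-1$ beforehand, so the low-degree set $A$ induces a complete $k$-graph, forcing $\binom{|A|-1}{k-1}\le \ell-2$, and the degree-sum count over $B$ gives $\left\lceil\frac{(\ell-1)(n-a)}{k}\right\rceil+\binom{a}{k}$. Your upper-bound construction is also the paper's: complete hypergraph on $A$, a linear nearly-$(\ell-1)$-regular hypergraph on $B$ from the existence theorem, one patch edge through the deficient vertices filled out with vertices of $A$, and saturation argued via linearity (the $\ell-1$ old edges through $v$ are disjoint away from $v$, each offering $k-1\ge 2$ candidate leaves, with the patch edge's leaf taken inside $A$ to avoid conflicts — the paper does this greedily rather than via defect-Hall, but it is the same mechanism). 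So the route is essentially identical.

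There is, however, one genuine flaw in how you state the upper bound: you claim to exhibit a saturated hypergraph of the right size \emph{for each} feasible $a$ with $\binom{a-1}{k-1}\le\ell-2$, and this is false for small $a$. The number of deficient vertices is $|D|=\bigl((\ell-1)(n-a)\bmod k\bigr)$, and the patch edge needs $k-|D|$ vertices of $A$; e.g.\ with $k=5$, $a=1$, and residue giving $|D|=2$, you would need $3$ vertices of $A$ but have only $1$ (and you cannot patch with vertices of $B$, since pushing a degree-$(\ell-1)$ vertex of $B$ to degree $\ell$ would destroy freeness). The theorem only requires the construction at the minimizing value $c$, and there the paper closes exactly this hole: since $\binom{a}{k}=0$ for $a\le k-1$ and $\left\lceil\frac{(\ell-1)(n-a)}{k}\right\rceil$ is decreasing in $a$, the minimizer satisfies $c\ge k-1\ge k-|D|$. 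Your phrase ``enough vertices of $A$ to fill it out'' silently assumes this; restricting the construction to the minimizer and adding the one-line monotonicity argument repairs the proof completely.
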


Similarly to the proof of Theorem \ref{theorem star from old paper} from \cite{EGKMS2017}, our main result involves finding a hypergraph with certain structural properties. A hypergraph $H$ is linear if for every pair of edges $e,f\in E(H)$, $|e\cap f|\leq 1$. Further, a $k$-uniform hypergraph is nearly-$d$-regular if every vertex has degree (total number of edges that contain that vertex) either $d$ or $d-1$, and less than $k$ vertices have degree $d-1$. If $dn/k$ is an integer, then a nearly-$d$-regular hypergraph will have only vertices of degree $d$, so we just say the hypergraph is $d$-regular. For Theorem \ref{theorem main}, we need nearly-$d$-regular $k$-uniform hypergraphs that are also linear.

A linear $k$-uniform $d$-regular hypergraphs on $n$ vertices with $m=\frac{dn}k$ edges is equivalent to an incidence structure known as a $(n,m,r,k)$-configuration, which is a set of $n$ points and $m$ lines such that each line contains $k$ points, each point is contained in $d$ lines, and lines intersect in at most one point.

It is known that for $n$ large enough, as long as $dn/k$ is an integer, $(n,m=\frac{dn}k,d,k)$-configurations exist, \cite{BS12}, and thus so do linear $d$-regular $k$-uniform hypergraphs. For the purposes of this paper though, linear nearly-$d$-regular $k$-uniform hypergraphs (i.e. we may not have $dn/k\in \mathbb{Z}$) are necessary. This is in essence a problem of graphical degree sequences. 

Given a finite sequence of non-negative integers, $d_1\geq d_2\geq\dots\geq d_n$, we say the sequence is graphical if there exists a simple graph on $n$ vertices whose degree sequence matches this sequence. The Erd\H{o}s-Gallai theorem, \cite{EG59}, gives an efficient characterization of graphical degree sequences. Unfortunately there is not an analogous result known for $k$-uniform hypergraphs. 

A sequence of non-negative integers is called $k$-graphical if there exists a $k$-uniform hypergraph with that degree sequence, and is called linearly-$k$-graphical if there exists a linear $k$-uniform hypergraph with the desired degree sequence. For recent work on $k$-grahical degree sequences, see \cite{BEFHRST2013}. In \cite{BBD2008}, the authors provide an Erd\H{o}s-Gallai-type theorem for linear hypergraphs, but only for non-uniform hypergraphs, so their results do not apply to nearly-regular uniform hypergraphs. Thus, as a tool for determining the saturation number for Berge stars, we also prove the existence of nearly-regular hypergraphs.

\begin{theorem}\label{theorem existence}
	Let $d\geq 1$ and $k\geq 2$. Then for all sufficiently large $n$, there exists a nearly-$d$-regular $k$-uniform hypergraph on $n$ vertices.
\end{theorem}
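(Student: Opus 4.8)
The plan is to give an explicit cyclic construction on the vertex set $\mathbb{Z}_n$ whose degree sequence is forced to be nearly-$d$-regular. First I would pin down the target. Writing $dn = km + t$ with $0 \le t \le k-1$, so that $m = \FL{dn/k}$ and $t = dn \bmod k$, any nearly-$d$-regular hypergraph must have exactly $m$ edges, exactly $t$ vertices of degree $d-1$, and $n-t$ vertices of degree $d$: indeed $\sum_v \deg(v) = km = dn - t$, and since all degrees lie in $\{d-1,d\}$, the number of low vertices is forced to equal the total deficiency $t$, which is automatically $<k$. Thus it suffices to realize this one degree sequence.

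For the core case $d \le k$ (where $m \le n$), I would take the evenly-spaced set of starting points $S = \{\FL{jn/m} : 0 \le j \le m-1\} \subseteq \mathbb{Z}_n$ and let the edges be the length-$k$ arcs $A_i = \{i, i+1, \dots, i+k-1\}$ (indices mod $n$) for $i \in S$. For $n > k$ these $m$ arcs are distinct, so $H$ is a simple $k$-uniform hypergraph, and $\deg(v)$ equals the number of points of $S$ in the window $\{v-k+1,\dots,v\}$. The whole argument then rests on one window-counting statement: for the evenly-spaced set $S$, every window of $k$ consecutive residues contains either $\FL{km/n}$ or $\CL{km/n}$ points of $S$. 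This is the standard balanced-word (two-gap) property, which I would prove in one line for non-wrapping windows from the counting function $f(x) = \CL{xm/n}$ and the identity $\CL{y+\delta} - \CL{y} \in \{\FL{\delta}, \CL{\delta}\}$ with $\delta = km/n$, handling the cyclic seam separately. Since $km/n = d - t/n \in (d-1,d]$ for large $n$, the two possible window counts are exactly $d-1$ and $d$ (both equal to $d$ when $t=0$), so every degree lies in $\{d-1,d\}$, and the averaging identity above forces exactly $t$ vertices of degree $d-1$. This is precisely nearly-$d$-regularity.

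For general $d$, write $d = qk + s$ with $0 \le s \le k-1$. I would stack $q$ mutually edge-disjoint $k$-regular \emph{stride families} to supply the bulk degree $qk$, and then add one evenly-spaced remainder family to supply the residual degree $s$ or $s-1$. For a stride $c$ coprime to $n$, the map $x \mapsto c^{-1}x$ carries the stride-$c$ arcs $\{i, i+c, \dots, i+(k-1)c\}$ onto ordinary intervals, so a full stride family (all $n$ translates) is $k$-regular and the window lemma transports verbatim to any evenly-spaced sub-family. Taking all $n$ translates of $q$ distinct strides gives every vertex degree $qk$, and an evenly-spaced selection of $\FL{sn/k}$ translates of one further stride adds degree $s$ to all but $sn \bmod k = dn \bmod k = t$ vertices, which receive $s-1$; the combined degree sequence is then exactly $d$ on $n-t$ vertices and $d-1$ on $t<k$ vertices. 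Since $q+1 = O(1)$ while $\varphi(n) \to \infty$, for large $n$ I can choose $q+1$ distinct strides in $(0,n/2)$, each coprime to $n$, which guarantees both that each family consists of $n$ distinct edges and that arcs of different stride magnitudes are never translates of one another, so the families are edge-disjoint.

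The main obstacle is this last bookkeeping for $d > k$: guaranteeing, uniformly over all large $n$, that one can pick $q+1$ strides that are simultaneously invertible modulo $n$ and yield genuinely edge-disjoint families, so that the clean window count holds in every family at once. The window lemma itself and the degree count are routine; the care is entirely in assembling compatible strides. If instead one also wanted linearity (which the arc construction does not provide, since two arcs may overlap in up to $k-1$ vertices), I would sidestep the stride argument and bootstrap from the $d \le k$ case by taking disjoint unions with $d$-regular linear $k$-uniform hypergraphs supplied by the $(n,m,d,k)$-configuration existence result cited above, at the cost of verifying finitely many base residues modulo $k/\gcd(d,k)$.
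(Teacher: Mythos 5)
Your circulant construction is a valid proof of the theorem exactly as stated, and it takes a genuinely different route from the paper. The degree-forcing count (exactly $t = dn \bmod k$ vertices of degree $d-1$ and $m=\FL{dn/k}$ edges), the balanced-word window lemma (which in fact needs no separate seam case: even for a wrapping window the count is $\CL{(x+k)m/n}-\CL{xm/n}$, since $\CL{z-m}=\CL{z}-m$), and the stride-stacking for $d>k$ all go through. The one claim you leave implicit --- that distinct strides $c_1,c_2\in(0,n/2)$ coprime to $n$ yield edge-disjoint families --- is true but deserves its half-line of proof: for $n>2k$ the difference multiset of a $k$-term arithmetic progression with difference $c$ realizes $\pm c$ with multiplicity exactly $k-1$ and everything else with smaller multiplicity, so a common edge would force $c_2\equiv\pm c_1 \pmod n$. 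By contrast, the paper's proof is probabilistic: it runs the configuration model on the nearly-regular degree sequence and uses the method of moments to show that the numbers of loops and overlaps converge to independent Poisson random variables, whence a positive probability of a simple outcome. Your argument is deterministic, explicit, and elementary, which is a real advantage for the statement as written.

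The substantive caveat is that the paper's proof delivers strictly more than the statement records, and that surplus is what the paper actually uses. Positive probability of no \emph{overlaps} makes the resulting hypergraph \emph{linear}, and the proof of Theorem \ref{theorem main} invokes precisely a linear nearly-$(\ell-1)$-regular hypergraph: the saturation argument needs the $\ell-1$ edges at each vertex of $V\setminus C$ to pairwise intersect only at that vertex so that distinct core leaves can be chosen greedily. Your arcs are maximally non-linear (consecutive arcs share $k-1$ vertices), so your proof, though correct for the literal statement, could not be substituted into the paper. Moreover, your proposed patch for linearity is circular: configurations on $n'$ vertices exist only when $dn'/k\in\mathbb{Z}$, so a disjoint-union bootstrap still requires a linear gadget carrying exactly the $t<k$ deficient vertices, and that gadget is the very object whose existence is in question --- your arc construction cannot supply it, and deleting an edge from a configuration produces $k$ (not fewer than $k$) deficient vertices, violating near-regularity. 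Handling that slightly irregular degree sequence while keeping linearity is exactly the difficulty the configuration-model proof is built to overcome.
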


In order to prove the preceding result, we will use the probabilistic method. The main idea behind the probabilistic method is that if one can exhibit a probabilistic experiment that has a positive probability of outputting a nearly-$d$-regular $k$-uniform hypergraph on $n$ vertices, then such a structure must necessarily exist.

To do this, we will use the configuration model for hypergraphs. Discussed in detail for graphs in \cite{JLR}, the configuration model for hypergraphs produces uniformly at random a $k$-uniform pseudo-hypergraph (a $k$-uniform hypergraph that may have repeated edges, and with edges that contain the same vertex multiple times) with a prescribed degree sequence. We will show that this model has a positive probability of producing a linear nearly-regular uniform simple hypergraph.

The layout of the rest of the paper is as follows: In Section \ref{section configuration model}, the configuration model will be discussed in more detail. In Section \ref{section existence}, the configuration model is used to prove Theorem \ref{theorem existence}. Finally, in section \ref{section Berge stars} the main theorem, Therem \ref{theorem main} is proved using Theorem \ref{theorem existence}. 

\section{Linear Nearly-Regular Uniform Hypergraphs}

\subsection{The Configuration Model}\label{section configuration model}

The following random model was first used implicitly by Bender and Canfield \cite{BC78} and made explicit by Bollob\'as \cite{B80}. 

Let $k\geq 2$ be an integer and $\mathbf{d}=(d_1,d_2,\dots,d_n)$ be a sequence of non-negative integers such that $k\mid \sum_{i=1}^nd_i$. We will describe how to generate a random $k$-uniform pseudo-hypergraph on $n$ vertices with degree sequence $\mathbf{d}$.

Let $S=\{v_{i,j}\mid 1\leq i\leq n, 1\leq j\leq d_i\}$ be set of $\sum_{i=1}^nd_i$ elements, which we will call \emph{configuration points}. For each $1\leq i\leq n$, let $V_i=\{v_{i,j}\mid 1\leq j\leq d_i\}$. A \emph{configuration} is a $k$-uniform perfect matching $M$ with vertex set $S$. To each configuration, we can associate a random pseudo-hypergraph $H$ with degree sequence $\mathbf{d}$: Let $V(H)=\{V_1,V_2,\dots,V_n\}$ and for each $k$-edge $e\in M$, $e=\{v_{i_1,j_1},v_{i_2,j_2},\dots,v_{i_k,j_k}\}$ add the $k$-element multiset $\{V_{i_1},V_{i_2},\dots,V_{i_k}\}$ to $E(H)$. This can be thought of as taking the matching $M$, and collapsing all the configuration points in $V_i$ down to a single vertex for each $1\leq i\leq n$, while preserving adjacencies. Let $\mathbb{H}^{(k)}_*(n,\mathbf{d})$ denote the probability space whose outcome is the $k$-uniform pseudo-hypergraph associated with a configuration chosen uniformly at random. Let $\phi(x)$ denote the number of configurations on $x$ points, and note that 
\begin{equation}\label{equation number of configurations}
\phi(x)=\frac{x!}{(k!)^{x/k}(x/k)!}.
\end{equation}

This process may create loops, which is when some edge in $M$ intersects some $V_i$ in more than one point. More formally, we will say a pair of configuration points $v_{i,j_1},v_{i,j_2}$ form a \emph{loop} at $V_i$ if they are contained in the same edge in $M$. For example, if four configuration points from the same set $V_i$ ended up all together in a single edge of $M$, we will count this as being $\binom{4}2=6$ loops at $V_i$. We would like to construct a linear hypergraph, so we are also interested in the number of edges this process creates that overlap in two or more vertices. More formally, we will say that four configuration points $v_{i_1,j_1},v_{i_1,j_2},v_{i_2,j_3},v_{i_2,j_4}$ form an \emph{overlap} if there exist two $k$-edges $e,f\in M$ such that $v_{i_1,j_1},v_{i_2,j_3}\in e$ and $v_{i_1,j_2},v_{i_2,j_4}\in f$, or $v_{i_1,j_1},v_{i_2,j_4}\in e$ and $v_{i_1,j_2},v_{i_2,j_3}\in f$. Note that if a configuration has no loops and no overlaps, then the associated hypergraph is a simple linear hypergraph.

\subsection{The Existence of Linear Nearly-Regular Uniform Hypergraphs}\label{section existence}

We will use the method of moments and the configuration model to show that linear nearly-regular hypergraphs exist. More precisely, we will need the following theorem. Here, given an integer $X$, let $(X)_t=\prod_{i=0}^{t-1} (X-i)=\frac{X!}{(X-t)!}$ denote the falling factorial.

\begin{theorem}{(Theorem 6.10 in \cite{JLR}}\label{theorem method of moments}
 Let $X_1,X_2,\dots,X_n,\dots$ and $Y_1,Y_2,\dots,Y_n,\dots$ be two sequences of random variables. If $\lambda,\mu\geq 0$ are real numbers such that, as $n\to \infty$, we have
 \[
 \E\left[ (X_n)_{\ell_1}\cdot(Y_n)_{\ell_2}\right]\to \lambda^{\ell_1}\cdot \mu^{\ell_2}
 \]
 for all integers $\ell_1,\ell_2\geq 0$, then $X_n$ and $Y_n$ converge in distribution to independent Poisson random variables with mean $\lambda$ and $\mu$ respectively.
\end{theorem}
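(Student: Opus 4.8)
The plan is to deduce convergence in distribution from convergence of moments, exploiting the fact that a Poisson law is uniquely determined by its moments. The starting point is the identity that if $Z\sim\mathrm{Poisson}(\lambda)$ then $\E[(Z)_t]=\lambda^t$ for every integer $t\geq 0$, and more generally that for independent $Z_1\sim\mathrm{Poisson}(\lambda)$ and $Z_2\sim\mathrm{Poisson}(\mu)$ one has $\E[(Z_1)_{\ell_1}(Z_2)_{\ell_2}]=\lambda^{\ell_1}\mu^{\ell_2}$. Thus the hypothesis says precisely that every joint factorial moment of $(X_n,Y_n)$ converges to the corresponding joint factorial moment of the independent pair $(Z_1,Z_2)$, and the product form $\lambda^{\ell_1}\mu^{\ell_2}$ is exactly what will encode independence in the limit. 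The goal is to upgrade this moment convergence to $(X_n,Y_n)\to(Z_1,Z_2)$ in distribution.

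First I would pass from factorial moments to ordinary moments. Since falling factorials and ordinary powers are related by the two families of Stirling numbers, via $(X)_t=\sum_j s(t,j)X^j$ and $X^m=\sum_t S(m,t)(X)_t$, convergence of all joint factorial moments is equivalent to convergence of all joint ordinary moments $\E[X_n^a Y_n^b]$, and the limiting values are exactly the moments $\E[Z_1^a Z_2^b]$ of the independent Poisson pair. Next I would verify that this limit law is determined by its moments: the joint moment generating function of $(Z_1,Z_2)$ equals $\exp(\lambda(e^u-1)+\mu(e^v-1))$, which is finite for all $(u,v)$, so each marginal satisfies Carleman's condition and the bivariate moment problem is determinate. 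With determinacy in hand, the multivariate method of moments (the Fr\'echet--Shohat theorem) applies: the uniformly bounded low-order moments force tightness of the sequence $(X_n,Y_n)$, every weak subsequential limit must share the prescribed moments, and determinacy forces that limit to be $(Z_1,Z_2)$. Hence the whole sequence converges in distribution to $(Z_1,Z_2)$, which simultaneously yields the Poisson marginals and their independence.

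The main obstacle is exactly this passage from moment convergence to distributional convergence, that is, the determinacy-plus-tightness argument, since convergence of moments alone is not in general sufficient; here it succeeds only because the Poisson limit has an everywhere-finite moment generating function. A more computational alternative avoids invoking Fr\'echet--Shohat by using the inversion formula
\[
\Pr(X=j)=\frac1{j!}\sum_{i\geq 0}\frac{(-1)^i}{i!}\,\E\!\left[(X)_{j+i}\right],
\]
obtained by Taylor-expanding the probability generating function about $s=1$, together with its bivariate analogue. Taking $n\to\infty$ termwise then sends $\Pr(X_n=a,Y_n=b)$ to $\frac{\lambda^a e^{-\lambda}}{a!}\cdot\frac{\mu^b e^{-\mu}}{b!}$, which for integer-valued variables is convergence in distribution. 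The difficulty in this second route is purely analytic: one must dominate the tail of the alternating series uniformly in $n$ to justify interchanging the limit with the infinite sum, which is precisely where the boundedness of the factorial moments furnished by their convergence must be leveraged.
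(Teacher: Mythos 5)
The paper offers no proof of this statement at all: it is imported as a black box (Theorem 6.10 of Janson, \L uczak and Ruci\'nski \cite{JLR}), so the relevant comparison is with the textbook proof. Your primary route --- converting joint factorial moments to joint ordinary moments via Stirling numbers, checking that the bivariate law of independent Poissons is moment-determinate (everywhere-finite moment generating function, hence Carleman for the marginals), and then invoking the multivariate Fr\'echet--Shohat theorem --- is correct and is genuinely different from the standard JLR argument, which follows your \emph{second} route: for integer-valued variables one expands $\Pr(X_n=a,\,Y_n=b)$ as an alternating series in the joint factorial moments and passes to the limit through finite truncations. Your route buys generality and conceptual economy (it works verbatim for any moment-determinate limit law, with independence falling out of the factorized joint moments plus determinacy); the inversion route buys an elementary, self-contained argument that directly produces convergence of the point probabilities without any appeal to determinacy theory.

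Two points deserve tightening. First, in the Fr\'echet--Shohat step, to conclude that a weak subsequential limit of $(X_n,Y_n)$ has the prescribed moments you need uniform integrability of $X_n^aY_n^b$; this follows from boundedness of the moments of order $(2a,2b)$, which your hypothesis supplies, but it should be said. Second, in the alternative route your suggestion to justify the interchange of limit and sum by dominating the tail is not quite right as stated: convergence of $\E\left[(X_n)_{j+i}\right]$ in $n$ for each fixed $i$ gives a bound $M_i=\sup_n \E\left[(X_n)_{j+i}\right]<\infty$ for each $i$, but nothing controls the growth of $M_i$ in $i$, so $\sum_i M_i/i!$ may diverge and dominated convergence can fail. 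The standard repair --- and the one used in \cite{JLR} --- is the Bonferroni (truncated inclusion--exclusion) inequalities: the even and odd partial sums of the alternating series bracket $\Pr(X_n=a,\,Y_n=b)$ for every fixed truncation depth, so one may let $n\to\infty$ at fixed depth and then let the depth grow, using that the limiting terms $\lambda^{a+i}\mu^{b+i'}/(i!\,i'!)$ are summable. With those two repairs both of your routes are complete.
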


We now have everything we need to show the existence of nearly-$d$-regular linear hypergraphs. The proof of this is a straightforward generalization of the proof provided in \cite{JLR} on the number of small cycles of different lengths in random regular graphs. For the sake of completeness, we provide the details of this generalization here.

\begin{proof}{proof of Theorem \ref{theorem existence}}
	
Given integers $n\geq d$ with $d$ constant, let $r=(dn\mod k)$, and let $\mathbf{d}$ be the degree sequence of a $n$ vertex nearly-$d$-regular hypergraph. Let $H=\mathbb{H}^{(k)}_{*}(n,\mathbf{d})$ be an outcome of the configuration model. Note that we have a total of $nd-r$ configuration points. Let $Z_1$ and $Z_2$ be the random variables that tracks the number of loops and the number of overlap in $H$ respectively. Our goal is to show that we have $Z_1=Z_2=0$ with positive probability, which will imply the existence of the desired hypergraph. To accomplish this, we will actually prove something much stronger; using the method of moments, we will show that $Z_1$ and $Z_2$ converge to independent Poisson random variables. Since Poisson random variables have a positive probability of being $0$, this will complete the proof.

Towards applying Theorem \ref{theorem method of moments}, let $\lambda=\frac{(d-1)(k-1)}2$ and $\mu=\left(\frac{(d-1)(k-1)}2\right)^2$, and fix integers $\ell_1,\ell_2\geq 0$. Consider the random variable $X=(Z_1)_{\ell_1}(Z_2)_{\ell_2}$. This counts ordered pairs, where the first coordinate contains an ordered set of $\ell_1$ distinct loops in $H$ and the second coordinate contains an ordered set of $\ell_2$ distinct overlapping pairs in $H$. a collection of $\ell_1$ loops and $\ell_2$ overlaps involves at most $\ell_1+2\ell_2$ edges in $M$, at most $\ell_1+2\ell_2$ vertices $V_i$, and at most $2\ell_1+4\ell_2$ configuration points $v_{i,j}$. This happens when each loop and overlap in question are in distinct edges and with distinct vertices. We will show that if this is not the case, the contribution to $\E(X)$ is negligible.

We will say a collection of $\ell_1$ loops and $\ell_2$ overlaps is of type $(a,b,c)$ if the collection involves $a$ edges, $b$ vertices and $c$ configuration points. Let $Y$ be the random variable that counts contributions to $X$ from collections of loops and overlaps of type $(a,b,c)$ when one or more of $a,b,c$ are not at their maximum value. Recall that $\phi(x)$, from Equation \eqref{equation number of configurations} is the function that counts the number of $k$-edge matchings on $x$ configuration points. For ease of notation set $C_1=(db)_c(ak)!$, and note that $C_1$ is constant with respect to $n$. Then
\begin{align*}
\E(Y)&\leq \sum_{\begin{array}{c}(a,b,c)\in [\ell_1+2\ell_2]^2\times [2\ell_1+4\ell_2], \\ (a,b,c)\neq (\ell_1+2\ell_2,\ell_1+2\ell_2,2\ell_1+4\ell_2)\end{array}}C_1\binom{n}b\binom{nd-r-c}{ak-c}\frac{\phi(nd-r-ak)}{\phi(nd-r)}\\
&=\sum_{\begin{array}{c}(a,b,c)\in [\ell_1+2\ell_2]^2\times [2\ell_1+4\ell_2], \\ (a,b,c)\neq (\ell_1+2\ell_2,\ell_1+2\ell_2,2\ell_1+4\ell_2)\end{array}}C_1\binom{n}b\binom{nd-r-c}{ak-c}\frac{(k!)^a\left(\frac{nd-r}k\right)_a}{(nd-r)_{ak}}\\
&=\sum_{\begin{array}{c}(a,b,c)\in [\ell_1+2\ell_2]^2\times [2\ell_1+4\ell_2], \\ (a,b,c)\neq (\ell_1+2\ell_2,\ell_1+2\ell_2,2\ell_1+4\ell_2)\end{array}}O(n^{a+b-c}).
\end{align*}
The explanation for the the first line of the preceding inequality is as follows. If we fix $a,b,c$, we can chose the $b$ vertices involved in $\binom{n}b$ ways. There are at most $db$ configuration points in these $b$ vertices, so we can chose and order the $c$ configuration points in at most $(db)_c$ ways. The ordering of these vertices gives an overcount of the number of ways we can chose which of these $c$ configuration points belong to which of the loops and overlap pairs. We then choose the remaining $ak-c$ vertices in the $a$ edges, and then $(ak)!$ overcounts the number of ways we can distribute the configuration points into the edges. Finally, $\phi(nd-r-ak)$ counts the number of ways to choose the remaining edges in the matching.

From the preceding inequality, if we show that $c>a+b$, we have $\E(Y)=o(1)$. Consider the $\ell_1+\ell_2$ pairs of vertices in our loops and quadruples of vertices in our overlapping edges one at a time, and as we do, we will mark each unmarked edge, vertex and configuration point involved with the pair. Each time we add a loop, if one of the configuration points in the loop is already marked, then both the edge and the vertex involved with the loop must have already been marked as well. Thus, each loop marks at least as many configuration points as vertices and edges. Similarly, if we add a quadruple involved with overlapping edges, if one configuration point was already marked, then both an edge and a vertex involved has already been marked. If two configuration points were already marked, then actually at least three edges and vertices must have already been marked (either two edges and one vertex, or two vertices and one edge, depending). If three or four configuration points were already marked, then all four of the edges and vertices involved in the overlap were already marked. In any case, we always mark at least as many cluster points as we do edges and vertices, so $c\geq a+b$. 

To see strict inequality, it suffices to note that if $c=2\ell_1+4\ell_2$, by the fact that $(a,b,c)\neq (\ell_1+2\ell_2,\ell_1+2\ell_2,2\ell_1+2\ell_2)$ gives the result, and if $c<2\ell_1+4\ell_2$, there must have been a first loop or pair of overlapping edges in which there was already a marked configuration point, say $v_{i,j}$. If before this there was never a case where we ran into a pre-marked vertex or edge, then the second configuration point $v_{i,j^*}$ in the same vertex as $v_{i,j}$ must not have been marked, but both the edge and the vertex that $v_{i,j}$ is in were pre-marked. If this was a loop, then we are done. If it was a quadruple involved in an overlap, then in the vertex that $v_{i,j}$ is not in, we must also have at most one pre-marked configuration point, and if so, the second vertex involved was also pre-marked. Thus, the first time we encounter a pre-marked configuration point, it must be that either there is one pre-marked configuration point and two pre-marked edges and vertices, or there are two pre-marked configuration points and at least three pre-marked edges and vertices. In either case, this is enough to guarantee $c>a+b$. Thus $\E(Y)=o(1)$.

Now, let $S$ be the contribution to $X-Y$ in which at least one vertex with a loop or involved in an overlap is degree $d-1$. We will show $\E(S)=o(1)$. Let $\ell=\ell_1+2\ell_2$, $m=\min\{\ell_1+2\ell_2,r\}$ and $C_2=(d\ell)_{2\ell}(k\ell)!$. We have

\begin{align*}
\E(S)&\leq \sum_{i=1}^mC_2\binom{r}{i}\binom{n-r}{\ell-i}\binom{nd-r-2\ell}{(k-2)\ell}\frac{\phi(nd-r-k\ell)}{\phi(nd-r)}\\
&=\sum_{i=1}^mC_2\binom{r}{i}\binom{n-r}{\ell-i}\binom{nd-r-2\ell}{(k-2)\ell}\frac{(k!)^\ell\left(\frac{nd-r}k\right)_\ell}{(nd-r)_{\ell k}}=\sum_{i=1}^mO(n^{-i})=o(1).
\end{align*}
Indeed, we first choose how many vertices will be of degree $d-1$, then chose the $\ell$ vertices involved in all the loops and overlapping pairs. The constant $(d\ell)_{2\ell}$ overcounts the number of ways to choose and order the $2\ell$ configuration points involved in loops and overlaps, and the ordering overcounts how many ways we can choose which configuration points belong to which loops and pairs. Then we choose the remaining $(k-2)\ell$ configuration points involved in the $\ell$ edges that contain loops and overlaps. The constant factor $(k\ell)!$ then gives an ordering of these vertices, which overcounts the number of ways the $k\ell$ configuration points can be sorted into the $\ell$ edges.

Let $X^*=X-Y-S$. We now assume that all the loops and overlaps occur in vertices of degree $d$, and each loop and overlap occur with different configuration points, on all different vertices, with all different edges. Recall that $\lambda=\frac{(d-1)(k-1)}2$ and $\mu=\left(\frac{(d-1)(k-1)}2\right)^2$. Here we get

\begin{align*}
\E(X^*)&=\frac{(n-r)_{\ell}}{2^{\ell_2}}\binom{d}2^\ell\binom{nd-r-2\ell}{(k-2)\ell}2^{\ell_2}\frac{((k-2)\ell)!}{((k-2)!)^\ell}\frac{\phi(nd-r-k\ell)}{\phi(nd-r)}\\
&=(1+o(1))n^\ell\left(\frac{d(d-1)}2\right)^\ell\frac{(nd-r-2\ell)_{(k-2)\ell}}{((k-2)\ell)!}\frac{((k-2)\ell)!}{((k-2)!)^\ell}\frac{(k!)^\ell\left(\frac{nd-r}k\right)_\ell}{(nd-r)_{\ell k}}\\
&=(1+o(1))n^\ell\left(\frac{d(d-1)}2\right)^\ell(nd)^{(k-2)\ell}\frac{(k(k-1))^\ell\left(\frac{nd}k\right)^\ell}{(nd)^{\ell k}}\\
&=(1+o(1))\left(\frac{(d-1)(k-1)}{2}\right)^\ell=(1+o(1))\lambda^{\ell_1}\cdot\mu^{\ell_2}
\end{align*}

Since $X$ counts ordered pairs of ordered sets of loops and overlaps, the factor $\frac{(n-r)_{\ell}}{2^{\ell_2}}$ chooses which vertices are involved in the loops and overlaps, and orders then, while the corrective term in the denominator accounts for the fact that overlaps involve two unordered vertices. Then the power of $\binom{d}2$ chooses which configuration points are in the $\ell$ loops and overlaps. We then choose the remaining $(k-2)\ell$ configuration points involved in the $\ell$ edges. Now, we sort the configuration points into edges. First, for each overlap with configuration points $v_{i_1,j_1},v_{i_1,j_2},v_{i_2,j_3},v_{i_2,j_4}$, we need to choose if $v_{i_1,j_1}$ is in an edge with $v_{i_2,j_3}$ or $v_{i_2,j_4}$, giving us a factor of $2^{\ell_2}$. Then we choose an ordered $(k-2)$-matching on the $(k-2)\ell$ configuration points in $\frac{((k-2)\ell)!}{((k-2)!)^\ell}$ ways. The ordering given here gives a pairing between the pairs of vertices in overlaps and loops and the $(k-2)$-edges in the matching, which gives us our $\ell$ edges. Then the final term counts how many ways we can put a $k$-matching down on the rest of the configuration points.

Now, since $\E(X)=\E(X^*)+o(1)$, the conditions of Theorem \ref{theorem method of moments} are met, so we have that $Z_1$ and $Z_2$ converge to independent Poisson random variables with mean $\lambda$ and $\mu$ respectively. This implies that $\Pr(Z_1=Z_2=0)=(1+o(1))e^{-(\lambda+\mu)}$, so for large enough $n$, there is a positive probability of $\mathbb{H}_*^{(k)}(n,\mathbf{d})$ producing a simple linear hypergraph, finishing the proof.
\end{proof}

\section{Saturation of Berge Stars}\label{section Berge stars}

To determine the saturation number for Berge stars, we need to give a few definitions. Let $F$ be a graph. Then a vertex $v$ in some Berge-$F$ is called a \emph{core vertex} if there exists a way to shrink the edges of Berge-$F$ down to create a copy of $F$ that contains the vertex $v$. When we consider a Berge-$K_{1,\ell}$, we will say a core vertex corresponding to a leaf of $K_{1,\ell}$ is a \emph{core leaf}.

\begin{theorem}
	For all $k\geq 3$, $\ell\in \mathbb{N}$, and large $n$, we have
	\[
	\sat_k(n,\text{Berge-}K_{1,\ell})=\min_{a\in[n],\binom{a-1}{k-1}\leq \ell-2} \left\lceil\frac{(\ell-1)(n-a)}k\right\rceil+\binom{a}k.
	\]
\end{theorem}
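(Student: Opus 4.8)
The plan is to prove matching lower and upper bounds, both of which are driven by a degree analysis together with the completeness forced by saturation on low-degree vertices. For the lower bound, let $H$ be any Berge-$K_{1,\ell}$-saturated $k$-uniform hypergraph and set $A=\{v:d_H(v)\le\ell-2\}$ with $a=|A|$. First I would show that $A$ spans a complete $k$-uniform subhypergraph. Indeed, if some $k$-set $S\subseteq A$ were a non-edge, then saturation forces a Berge-$K_{1,\ell}$ in $H+S$; since $H$ itself is Berge-free, this copy must use $S$, and because every edge of a star contains the center $c$, we get $c\in S$ and $d_{H+S}(c)\ge\ell$, hence $d_H(c)\ge\ell-1$, contradicting $c\in A$. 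Thus all $\binom ak$ edges inside $A$ are present, and each $v\in A$ satisfies $\binom{a-1}{k-1}\le d_H(v)\le\ell-2$, so $a$ is a legal index in the minimum.

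The edge count then follows from a degree sum. Writing $B=V\setminus A$, every vertex of $B$ has degree at least $\ell-1$, so
\[
k|E(H)|=\sum_{v\in A}d_H(v)+\sum_{v\in B}d_H(v)\ge a\binom{a-1}{k-1}+(\ell-1)(n-a)=k\binom ak+(\ell-1)(n-a).
\]
Dividing by $k$ and using that $|E(H)|-\binom ak$ is an integer yields $|E(H)|\ge\binom ak+\lceil(\ell-1)(n-a)/k\rceil$, which is at least the claimed minimum. Notably this half uses saturation only through the completeness of $A$.

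For the upper bound I would choose a minimizer $a^*$; since $\binom ak=0$ and $\lceil(\ell-1)(n-a)/k\rceil$ is non-increasing for $a\le k-1$, I may assume $a^*\ge k-1$. Partition the $n$ vertices into $A$ of size $a^*$ and $B$ of size $n-a^*$, put the complete hypergraph on $A$, and on $B$ place a linear nearly-$(\ell-1)$-regular $k$-uniform hypergraph, which exists for large $n$ by Theorem~\ref{theorem existence}. If $s:=(\ell-1)(n-a^*)\bmod k$ is positive, this hypergraph has exactly $s<k$ vertices of degree $\ell-2$; I then add one further edge $e_0$ consisting of these $s$ vertices together with any $k-s$ vertices of $A$, which is possible because $k-s\le k-1\le a^*$. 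This raises every vertex of $B$ to degree exactly $\ell-1$ and the chosen $A$-vertices to degree $\binom{a^*-1}{k-1}+1\le\ell-1$, while the total number of edges becomes exactly $\binom{a^*}k+\lceil(\ell-1)(n-a^*)/k\rceil$.

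To finish I would verify that this $H$ is Berge-$K_{1,\ell}$-saturated. Every vertex now has degree at most $\ell-1<\ell$, so no vertex can be the center of $\ell$ edges and $H$ is Berge-free. For saturation, any non-edge $e$ must meet $B$ (as $A$ is complete), and any $v\in e\cap B$ has degree exactly $\ell-1$, so in $H+e$ it lies in $\ell$ edges; since all but at most two of these (namely $e$ and possibly $e_0$) pairwise meet only in $v$ by linearity, and $k\ge3$ guarantees that two distinct edges through $v$ cannot share all their remaining vertices, a short Hall's-theorem argument produces a system of distinct leaves, i.e.\ a Berge-$K_{1,\ell}$ centered at $v$. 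The main obstacle is exactly the ceiling: a nearly-$(\ell-1)$-regular hypergraph on $B$ alone has only $\lfloor(\ell-1)(n-a^*)/k\rfloor$ edges and carries low-degree vertices that would break saturation (a non-edge using only such vertices and $A$ would never reach degree $\ell$ anywhere). The entire difficulty is therefore the careful insertion of the single straddling edge $e_0$, which simultaneously fixes the parity, eliminates the problematic degree-$(\ell-2)$ vertices, and keeps every degree below $\ell$, together with the reduction to $a^*\ge k-1$ that guarantees enough room inside $A$ to absorb it.
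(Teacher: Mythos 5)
Your proposal is correct and takes essentially the same route as the paper's own proof: the same lower bound via the completeness of the low-degree set $A$ together with a degree-sum count, and the same construction for the upper bound (complete hypergraph on a minimizing set $A$, a linear nearly-$(\ell-1)$-regular hypergraph on $B$ from Theorem~\ref{theorem existence}, plus one straddling edge absorbing the degree-$(\ell-2)$ vertices, enabled by the observation that the minimizer is at least $k-1$). The only cosmetic difference is that you select the distinct core leaves via a Hall-type argument where the paper chooses them greedily, starting with the new edge and routing the core leaf of the straddling edge into $A$.
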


\begin{proof}
	First we will establish the lower bound. Let $H$ be a $k$-uniform Berge-$K_{1,\ell}$ saturated hypergraph on $n$ vertices. Let $A\subseteq V(H)$ be the set of vertices with degree less than $\ell-1$. Note that $H[A]=K^{(k)}_{|A|}$ since if any $k$ vertices in $A$ are not in an edge together, adding this edge cannot create a Berge-$K_{1,\ell}$. This implies that $\binom{|A|-1}{k-1}\leq \ell-2$ since the vertices of $|A|$ have degree $\leq \ell-2$.
	
	Now we can count the number of edges in $H$ that are not completely contained in $A$. Since the vertices in $B=V(H)\setminus A$ all have degree at least $\ell-1$, we have the following:
	\[
	\sum_{e\in E(H)} |e\cap B|\geq (\ell-1)|B|.
	\]
	If $e\subseteq A$, then $|e\cap B|=0$, and otherwise, $|e\cap B|\leq k$, so
	\[
	\left(|E(H)|-\binom{|A|}k\right) k\geq (\ell-1)|B|,
	\]
	so
	\[
	|E(H)|\geq \left\lceil\frac{(\ell-1)|B|}{k}\right\rceil+\binom{|A|}k.
	\]
	Since $|B|=n-|A|$ and $|A|\in [n]$,  the lower bound follows.
	
	Now, let us consider the upper bound. We will give a construction that is Berge-$K_{1,\ell}$-saturated with the correct number of edges. Let $c$ be such that 
	\[
	\min_{a\in[n],\binom{a-1}{k-1}\leq \ell-2} \left\lceil\frac{(\ell-1)(n-a)}k\right\rceil+\binom{a}k=\left\lceil\frac{(\ell-1)(n-c)}k\right\rceil+\binom{c}k.
	\]
	Let $|V|=n$. Let $C\subseteq V$ be such that $|C|=c$. First, add all the edges in $\binom{C}k$. Now, construct a $k$-uniform nearly-$(\ell-1)$-regular linear hypergraph on $V\setminus C$. We know such a structure exists for large enough $n$ by Theorem \ref{theorem existence}. Let $D\subseteq V\setminus C$ be the set of vertices in the nearly-regular hypergraph that have degree $\ell-2$. If $D$ is empty, the construction is done. Otherwise, add one more edge containing $D$ and $k-|D|$ vertices from $C$. Note that $C$ is large enough for this since the fact that $\binom{a}k=0$ for $1\leq a\leq k-1$, and the fact that  $\left\lceil\frac{(\ell-1)(n-a)}k\right\rceil$ is strictly decreasing with $a$ implies that $c\geq k-1\geq k-|D|$. This completes the construction. It is clear from the construction that this hypergraph has the desired number of edges.
	
	Note that the construction is Berge-$K_{1,\ell}$-free since no vertex has degree $\ell$. We now will show this construction is Berge-$K_{1,\ell}$-saturated. First, note that any edge $e$ we add must contain at least one vertex from $V\setminus C$, say $v\in V\setminus C$. Due to the linear hypergraph structure on $V\setminus C$, if $v\not\in D$, it is clear that before $e$ was added, $v$ was the center of a Berge-$K_{1,\ell-1}$, and that any choice of vertices from the $\ell-1$ edges incident with $v$ gives a legal choice for the core vertices of this Berge-$K_{1,\ell-1}$. Thus, as long as we first choose a vertex in $e\setminus v$ to be the core leaf in $e$ in the Berge-$K_{1,\ell}$ we are building, we always have a choice for a core leaf in the remaining edges incident with $v$ (note, $k\geq 3$, so each edge from the linear hypergraph has at least two choices for a core leaf, and only one could have been used already when we chose one). If $v\in D$, this is only slightly harder as we need to be careful about which vertex we choose to be the core leaf in the edge that contains $D$ and vertices from $C$. By choosing this core leaf from $C$, we guarantee this will not conflict with any vertices in the edges of the linear hypergraph, and so again we can proceed as before by choosing the core leaf in $e$, then choosing the remaining core leaves. Thus, adding $e$ has created a Berge-$K_{1,\ell}$, and thus the construction is saturated.
\end{proof}

\section{Conclusion}

We were able to determine exactly the saturation number of Berge stars. There are many other families of graphs that would be interesting to study though. The first family that comes to mind is complete graphs. The exact saturation numbers for Berge triangles were determined in \cite{EGKMS2017}, but not much is known about larger complete graphs. If $\ell\geq k+2$, then it can be seen that $\sat_k(n,K_\ell)\leq \binom{\ell-1}k(n-\ell+2)$. Indeed, given a vertex set $V$ with $|V|=n$, let $A\cup B=V$ be a partition with $|A|=\ell-2$. Then the hypergraph that contains every edge that intersects $B$ in at most one vertex is Berge-$K_\ell$-saturated. It is unclear if this is the optimal construction though, and this construction no longer works for $\ell\leq k+1$. 

Determining the saturation numbers for Berge cycles would also be very interesting. Some upper bounds on cycles are given in \cite{EGKMS2017}, but no non-trivial lower bounds are known.

\bibliographystyle{amsplain}
\bibliography{bib}{}

\end{document}